\numberwithin{equation}{section}
\newtheorem{thm}{Theorem}[section]
\newtheorem{lemma}[thm]{Lemma}
\theoremstyle{definition}
\newtheorem{exam}[thm]{Example}
\newtheorem{define}[thm]{Definition}
\newenvironment{quest}{\textbf{Question} }
\newcommand{\cd}{. }                                          
\newcommand{\kd}{, }                                          
\newcommand{\image}[1]{\mathrm{Im}\: #1}                      
\newcommand{\kernel}[1]{\mathrm{Ker}\: #1}
\newcommand{\socle}[1]{S(#1)}                    
\newcommand{\jac}[1]{J(#1)}
\newcommand{\tanni}[1]{\langle #1 \rangle}
\begin{document}
\title{Quivers with relations of Harada algebras}
\author{Kota Yamaura}
\address{Graduate School of Mathematics\\ Nagoya University\\ Frocho\\ Chikusaku\\ Nagoya\\ 464-8602\\ Japan}
\email{m07052d@math.nagoya-u.ac.jp}
\date{}
\maketitle

\begin{abstract}
For a finite dimensional algebra $R$\kd we give an explicit
description of quivers with relations of block extension of
$R$\cd As an application\kd we describe quivers with relations of Harada
algebras by using those of the corresponding quasi-Frobenius algebras\cd
\end{abstract}

\section{Introduction}
Two classes of artinian rings have been studied for a long
time\cd
The one is Nakayama rings and the other is quasi-Frobenius rings
(QF-rings)\cd
In 1980s\kd Harada introduced the new class of artinian rings called Harada
rings nowadays\kd
which give a common generalization of Nakayama rings and QF-rings\cd
Many authors have studied the structure of Harada rings (e.g. \cite{quasi_H,Lec_N,dua_H,LE_QF,LE_gur,OHr1,OHr2,OHr3})\cd
Among others, Oshiro \cite{OHr3} gave a structure theorem of Harada rings as upper
staircase factor rings of block extensions of QF-rings\cd
This plays an important role in the theory of Harada rings\cd

In \cite{Thr}, Thrall paid attention to three properties of QF-algebras, called QF-$1$, QF-$2$, and QF-$3$. 
Harada algebras satisfies the property QF-$3$ which is the condition that injective hull of the algebra 
is projective. 
This property is often called $1$-Gorenstein \cite{FGR,AR} or dominant dimension at least one \cite{T}, 
and often plays an important role in the representation theory. 
Harada algebras give a class of QF-$3$ algebras, and their indecomposable projective modules 
have "nice" structure (see Definition \ref{block_ext}).

In this paper\kd we study Harada algebras by using method of representation
theory of algebras\cd
In 1970s\kd Gabriel introduced quivers with relations in representation
theory of algebras\cd
This gives a powerful tool to calculate modules over algebras (see
\cite{ASS,ARS})\cd 
The aim of this paper is to describe the quivers with relations of Harada
algebras by using those of the corresponding QF-algebras\cd
More generally\kd we give a description of quivers with relations of block
extensions of arbitrary finite dimensional algebras\cd

This paper is organized as follows\cd In section 2\kd we recall basic
results on Harada algebras\kd especially their structure theorem in terms of
block extensions and upper staircase factor algebras\cd
In section 3\kd we state our main theorems that describe the quivers with
relations of block extensions of arbitrary algebras\cd
We prove them in section 4\cd
In section 5\kd we describe quivers with relations of Harada algebras by
studying those of upper staircase factor algebras\cd

Throughout this paper\kd an algebra means a finite dimensional associative
algebra over an algebraically closed field $K$\cd
We always deal with right $R$-modules\cd
We denote by $\jac{M}$ the Jacobson radical of an $R$-module $M$ and by
$\socle{M}$ the socle of $M$\cd
We denote by $\tanni{x}_{ij}$ the matrix with the $(i,j)$-entry $x$ and
other entries $0$\cd
\bigskip

\section{Block extension and Harada algebras}
In this section\kd we recall definitions and basic facts on Harada algebras\kd in particular block extensions\kd upper staircase factor algebras\kd and the structure theorem of Harada algebras\cd These are due to Oshiro in \cite{OHr1} \cite{OHr2} \cite{OHr3}\cd

We start with giving a definition of left Harada algebra from its structural point of view\cd 

\begin{define}\label{def_Harada}
Let $R$ be a basic algebra and $\mathrm{Pi}(R)$ be a complete set of
orthogonal primitive idempotents of $R$\cd
We call $R$ a \emph{left Harada algebra} if $\mathrm{Pi}(R)$ can be arranged that $\mathrm{Pi}(R)=\{e_{ij}\}_{i=1}^{m},_{j=1}^{n_i}$
where
\begin{enumerate}
\def\labelenumi{(\theenumi)}
\item $e_{i1}R$ is an injective $R$-module for any $i=1,\cdots,m$\kd
\item $e_{ij}R \simeq e_{i,j-1}\jac{R}$ for any $i=1,\cdots,m \kd
j=2,\cdots,n_i$\cd
\end{enumerate}
\end{define}

In the rest of this section\kd we recall the structure theorem of Harada algebras\cd
First we have to recall block extensions\cd

\begin{define}\label{block_ext}
Let $R$ be a basic algebra and let $\mathrm{Pi}(R)=\{e_1,\cdots,e_m\}$\cd
Then $R$ can be represented as the following matrix form\cd
\begin{eqnarray*}
R=\left( \begin{array}{cccc}
e_1Re_1 & e_1Re_2 & \cdots & e_1Re_m \\
e_2Re_1 & e_2Re_2 & \cdots & e_2Re_m \\
\vdots & \vdots & \ddots & \vdots \\
e_mRe_1 & e_mRe_2 & \cdots & e_mRe_m
\end{array} \right)
\end{eqnarray*}
We put $Q_i=e_iRe_i$ and $A_{ij}=e_iRe_j$\cd
Then $Q_i$ is the subalgebra of $R$ with an identity element $e_i$\kd and $A_{ij}$ is a $(Q_i,Q_j)$-bimodule\cd

Let $n_1,n_2,\cdots,n_m \in \mathbb{N}$ be natural numbers\cd
For $1 \leq i,s \leq m \kd 1 \leq j \leq n_i$ and $1 \leq t \leq n_s$\kd we define $P_{ij,st}$ as follows\cd
\[
P_{ij,st}:=
\begin{cases}
Q_i & (i=s \kd j \leq t)  \\
\jac{Q_i} & (i=s \kd j>t) \\
A_{ij} & (i \neq s) \\
\end{cases}
\]
and put $n_i \times n_s$ matrices $P(i,s)$
\begin{eqnarray*}
P(i,s) :=
\left( \begin{array}{cccc}
P_{i1,s1} & P_{i1,s2} & \cdots & P_{i1,sn_s} \\
P_{i2,s1} & P_{i2,s2} & \cdots & P_{i2,sn_s} \\
\vdots  & \vdots  & \ddots & \vdots  \\
P_{in_i,s1} & P_{in_i,s2} & \cdots & P_{in_i,sn_s}
\end{array} \right) 
= \begin{cases}
\left( \begin{array}{ccc}
Q_i & \cdots & Q_i \\
   & \ddots & \vdots  \\
\jac{Q_i} &  & Q_i
\end{array} \right) & (i=s) \\
\left( \begin{array}{ccc}
A_{is} & \cdots & A_{is} \\
\vdots &        & \vdots \\
A_{is} & \cdots & A_{is}
\end{array} \right) & (i \neq s) \cd
\end{cases}
\end{eqnarray*}
We define the \emph{block extension} $P$ of $R$ for $\{ n_1,\cdots,n_m  \}$ as
\[
P=R(n_1,\cdots,n_m)=\left( \begin{array}{cccc}
P(1,1) & P(1,2) & \cdots & P(1,m) \\
P(2,1) & P(2,2) & \cdots & P(2,m) \\
\vdots & \vdots & \ddots & \vdots \\
P(m,1) & P(m,2) & \cdots & P(m,m)
\end{array} \right) \cd
\]
Clearly using usual matrix multiplication\kd $P$ forms a ring\cd
\end{define}

We can see that $P$ is a basic finite dimensional $K$-algebra with a complete set of orthogonal primitive idempotents $\{ f_{ij} \ | \ 1 \leq i \leq m \kd 1 \leq j \leq n_i \}$ where $f_{ij}=\tanni{1}_{ij,ij}$\cd
It is known that any block extension of a basic QF-algebra is a basic left Harada algebra (see \cite{OHr1})\cd

\begin{exam}\label{exam1}
For example\kd let $m=2$ and consider the case $n_1=3 \kd n_2=2$\cd
Then
\[
R(3,2)=
\left( \begin{array}{ccc|cc}
Q_1 & Q_1 & Q_1 & A_{12} & A_{12} \\
\jac{Q_1} & Q_1 & Q_1 & A_{12} & A_{12} \\
\jac{Q_1} & \jac{Q_1} & Q_1 & A_{12} & A_{12} \\ \hline
A_{21} & A_{21} & A_{21} & Q_2 & Q_2 \\
A_{21} & A_{21} & A_{21} & \jac{Q_2} & Q_2 \\
\end{array} \right) \cd
\]
This is a Harada algebra if $R$ is a QF-algebra\cd
\end{exam}

Next we recall upper staircase factor algebras of block extensions\cd

\begin{define}\label{upper_st_fac}
Let $R$ be a basic QF-algebra over a field $K$ with Nakayama permutation $\sigma$ and $P=R(n_1,\cdots,n_m)$ a block extension in Definition \ref{block_ext}\cd 
Then $P$ is a basic left Harada algebra\cd
First we define a sub $(P(i,i),P(\sigma(i),\sigma(i)))$-module $S(i,\sigma(i))$ of $P(i,\sigma(i))$ as follows\cd
\begin{itemize}
\item[(I)] Case $i=\sigma(i)$\cd We take a sequence
\[
1 \leq c_{i1} \leq c_{i2} \leq \cdots \leq c_{in_i} \leq n_i \cd
\]
of natural numbers\cd
We define a subset $S(i,i)$ of $P(i,i)$ as follows\cd
\[
S_{ij,it}=\begin{cases}
0 & (1 \leq t \leq c_{ij}) \kd \\
\socle{Q_i} & (c_{ij} < t \leq n_i)\kd
\end{cases}
\]
\[
P(i,i) \supset S(i,i)=\left( \begin{array}{cccc}
S_{i1,i1} & S_{i1,i2} & \cdots & S_{i1,in_i} \\
S_{i2,i1} & S_{i2,i2} & \cdots & S_{i2,in_i} \\
\vdots  & \vdots & \ddots & \vdots  \\
S_{in_i,i1} & S_{in_i,i2} & \cdots & S_{in_i,in_i}
\end{array} \right) \cd
\]
We note that $\socle{Q_i}$ is a left simple and a right simple $Q_i$-module\cd
We can see that $S(i,i)$ is sub $(P(i,i),P(i,i))$-module of $P(i,i)$\cd 
It has the following form\cd
\[
S(i,i) =
\left(
\begin{array}{@{\,}ccccc@{\,}}
 0  \cdots  0  &\multicolumn{3}{|r}{\hspace{10mm}S(Q_i)}  \\ \cline{2-2}
  &    &  \multicolumn{1}{|c}{} \\
  &    &  \multicolumn{1}{|c}{} \\ \cline{3-3}
  &  &  &  \multicolumn{1}{|c}{}  \\ \cline{4-4}
 \multicolumn{3}{l}{0}&    \\
\end{array}\right) \cd
\]
\item[(II)] Case $i \neq \sigma(i)$\cd We take a sequence
\[
1 \leq c_{i1} \leq c_{i2} \leq \cdots \leq c_{in_i} \leq n_{\sigma(i)} \cd
\]
of natural numbers\cd
We define a subset $S(i,\sigma(i))$ of $P(i,\sigma(i))$ as follows\cd
\[
S_{ij,\sigma(i)t}=\begin{cases}
0 & (1 \leq t \leq c_{ij}) \kd \\
\socle{A_{i\sigma(i)}} & (c_{ij} < t \leq n_{\sigma(i)}) \kd
\end{cases}
\]
\[
P(i,\sigma(i)) \supset S(i,\sigma(i))=\left( \begin{array}{cccc}
S_{i1,\sigma(i)1} & S_{i1,\sigma(i)2} & \cdots & S_{i1,\sigma(i)n_{\sigma(i)}} \\
S_{i2,\sigma(i)1} & S_{i2,\sigma(i)2} & \cdots & S_{i2,\sigma(i)n_{\sigma(i)}} \\
\vdots  & \vdots & \ddots & \vdots  \\
S_{in_i,\sigma(i)1} & S_{in_i,\sigma(i)2} & \cdots & S_{in_i,\sigma(i)n_{\sigma(i)}}
\end{array} \right) \cd
\]
We note that $\socle{A_{i\sigma(i)}}$ is a left simple $Q_i$-module and a right simple $Q_{\sigma(i)}$-module\cd
We can see that $S(i,\sigma(i))$ is sub $(P(i,i),P(\sigma(i),\sigma(i)))$-module of $P(i,\sigma(i))$\cd
It has the following form\cd
\[
S(i,\sigma (i)) =
\left(
\begin{array}{@{\,}ccccc@{\,}}
 0  \cdots  0  &\multicolumn{3}{|r}{\hspace{10mm}S(A_{i \sigma{i}})}  \\ \cline{2-2}
  &    &  \multicolumn{1}{|c}{} \\
  &    &  \multicolumn{1}{|c}{} \\ \cline{3-3}
  &  &  &  \multicolumn{1}{|c}{}  \\ \cline{4-4}
 \multicolumn{3}{l}{0}&    \\
\end{array}\right) \cd
\]
\end{itemize}

Next we define a subset $X$ of $P$ by putting
\[
P(i,s) \supset X(i,s) = \begin{cases}
S(i,\sigma(i)) & (s=\sigma(i)) \kd \\
0              & (s \neq \sigma(i)) \kd
\end{cases}
\]
\[
X=\left( \begin{array}{cccc}
X(1,1) & X(1,2) & \cdots & X(1,m) \\
X(2,1) & X(2,2) & \cdots & X(2,m) \\
\vdots & \vdots & \ddots & \vdots \\
X(m,1) & X(m,2) & \cdots & X(m,m)
\end{array} \right) \cd
\]
Then $X$ is an ideal of $P$\cd We define an \emph{upper staircase factor algebra} $\overline{P}$ of $P$ by
\[
\overline{P}=P/X=R(n_1,\cdots,n_m)/X \cd
\]
\end{define}

It is known that any upper staircase factor algebra of a block extension of a basic QF-algebra is a basic left Harada algebra (see \cite{OHr3})\cd
Conversely all left Harada algebras are constructed by these operations\cd
In fact\kd the following well-known result holds\cd

\begin{thm}[Structure Theorem of Harada algebras \cite{OHr3}]\label{Str_of_H}
For any basic left Harada algebra $T$\kd there exists a basic QF-algebra $R$ such that $T$ is isomorphic to an upper staircase factor algebra of a block extension of $R$\cd
\end{thm}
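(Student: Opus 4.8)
The plan is to recover the QF-algebra as a corner algebra of $T$ and then to exhibit $T$ directly as an upper staircase factor of its block extension; this is in essence Oshiro's argument. First I would arrange $\mathrm{Pi}(T)=\{e_{ij}\}$ as in Definition \ref{def_Harada}, put $f=\sum_{i=1}^{m}e_{i1}$, and set $R:=fTf\cong\enmo{T}{fT}$. Since $fT=\bigoplus_{i=1}^{m}e_{i1}T$ with the $e_{i1}T$ pairwise non-isomorphic indecomposables, $R$ is a basic algebra whose complete set of orthogonal primitive idempotents is $\{e_{i1}\}_{i=1}^{m}$, and the numbers $n_i$ are already read off from the arrangement of $\mathrm{Pi}(T)$. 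By Definition \ref{def_Harada}(1) the module $fT$ is projective--injective; since each $e_{i1}T$ is an indecomposable injective it has simple socle, so $fT$ is faithful and $T$ is a QF-$3$ algebra with minimal faithful module $fT$.

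The first and principal step, and the one using the left Harada axioms in an essential way, is to prove that $R=fTf$ is a QF-algebra. Definition \ref{def_Harada}(2) gives, for each $i$, isomorphisms $e_{ij}T\cong e_{i1}\jac{T}^{j-1}$, so $e_{i1}T$ has a uniserial top of length $n_i$ with layers $\topp{e_{i1}T},\dots,\topp{e_{in_i}T}$, one has $e_{in_i}T\cong e_{i1}\jac{T}^{n_i-1}$, and every indecomposable projective $T$-module embeds in a projective--injective one. Analysing the modules $e_{i1}T$ --- which are projective, injective, and of simple socle --- one shows that the standard equivalence $\homo{T}{fT}{-}\colon\mathrm{add}(fT)\xrightarrow{\ \sim\ }\mathrm{proj}\,R$, which sends $e_{i1}T$ to the indecomposable projective $e_{i1}R$, makes each $e_{i1}R$ an injective $R$-module; hence $R$ is self-injective, i.e.\ a basic QF-algebra. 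Write $\sigma$ for its Nakayama permutation.

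With $R$, $\sigma$ and the $n_i$ in hand, form $P=R(n_1,\dots,n_m)$ as in Definition \ref{block_ext}, noting $Q_i=e_{i1}Te_{i1}$ and $A_{is}=e_{i1}Te_{s1}$. I would then compare $P$ with $T$ through the subspaces $e_{ij}Te_{st}\subseteq T$: fixing, for each $i$, a compatible family of embeddings $e_{ij}T\hookrightarrow e_{i1}T$ with image $e_{i1}\jac{T}^{j-1}$ (supplied by Definition \ref{def_Harada}(2)) and using the injectivity of $e_{i1}T$ to extend homomorphisms along them, one builds block by block a $K$-algebra homomorphism $\Phi\colon P\to T$ whose $((i,j),(s,t))$-component matches the slot $P_{ij,st}$ --- namely $Q_i$, $\jac{Q_i}$, or $A_{is}$ --- with $e_{ij}Te_{st}$, exactly as in the matrix shape of Definition \ref{block_ext}. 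Then $\Phi$ is surjective because $T=\bigoplus_{i,j,s,t}e_{ij}Te_{st}$, and to identify $\ker\Phi$ one compares $e_{ij}Te_{st}$ with $P_{ij,st}$: the only possible discrepancy sits inside the simple socle $\socle{Q_i}$ (when $s=\sigma(i)=i$) or $\socle{A_{i\sigma(i)}}$ (when $s=\sigma(i)\neq i$), so recording for each $i$ and $j$ the largest $t$ at which no discrepancy occurs yields sequences $1\le c_{i1}\le\cdots\le c_{in_i}$ for which $\ker\Phi$ is precisely the ideal $X$ of Definition \ref{upper_st_fac} attached to $\sigma$ and these $c_{ij}$. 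Hence $\Phi$ descends to an isomorphism $\overline{P}=R(n_1,\dots,n_m)/X\xrightarrow{\ \sim\ }T$, which is the assertion.

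The main obstacle is the QF-property of $R=fTf$ together with the correct reading of its Nakayama permutation. A secondary subtlety is that the embeddings $e_{ij}T\hookrightarrow e_{i1}T$ must be chosen coherently within each block so that the slot-wise maps assemble into a ring homomorphism; the freedom available when extending homomorphisms along them (thanks to the injectivity of $e_{i1}T$) both makes this possible and is precisely what produces the socle corrections packaged into $X$. The remaining work --- checking that $\Phi$ respects matrix multiplication and carrying out the dimension bookkeeping that fixes the $c_{ij}$ --- is routine.
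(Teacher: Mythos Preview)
The paper does not supply a proof of this theorem: it is stated as a known result and attributed to Oshiro \cite{OHr3}, with no argument given in the present text. So there is no ``paper's own proof'' to compare your proposal against.

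That said, your outline is a faithful sketch of the standard argument. Taking $f=\sum_i e_{i1}$, proving that $R=fTf$ is basic QF, forming $P=R(n_1,\dots,n_m)$, and constructing a surjection $\Phi\colon P\to T$ whose kernel is an upper staircase ideal is exactly Oshiro's route. The points you flag as the real work --- self-injectivity of $fTf$ and the coherent choice of embeddings $e_{ij}T\hookrightarrow e_{i1}T$ so that the slot maps assemble into a ring homomorphism --- are indeed where the substance lies, and your description of how the socle discrepancies produce the sequences $c_{ij}$ is correct in spirit. One small omission: you should also record that $c_{in_i}\le n_{\sigma(i)}$, as required in Definition~\ref{upper_st_fac}; this follows because $e_{in_i}T$ still has simple socle isomorphic to $\topp{e_{\sigma(i)1}T}$, so the bottom row of $S(i,\sigma(i))$ never fills the first column. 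With that, your plan matches the literature proof that the paper is citing.
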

We call the above $R$ the \emph{frame QF-algebra} of $T$\cd

The above theorem motivates us to consider the following question: \\

\begin{quest}
Is the quiver with relations of a Harada algebra described by using those of the frame QF-algebra ?
\end{quest}
\bigskip

The answer is ``yes''\cd We describe the quivers with relations of block extensions of arbitrary (not necesarilly QF) algebras $R$ by using those of $R$ in section $3$ and $4$\cd
Moreover for the case $R$ is QF\kd we describe the quivers with relations of upper staircase factor algebras in section $5$\cd

\begin{exam}\label{exam_of_str}
We consider $P=R(3,2)$ of Example \ref{exam1}\cd
We assume that $R$ is a basic QF-algebra with Nakayama permutation identity\cd
For example\kd we take the following sequence\cd
\[
c_{11}=1 \kd c_{12}=2 \kd c_{13}=2 \kd c_{21}=1 \kd c_{22}=2 \cd
\]
Then $X$ defined above is as follows\cd
\[
X=\left( \begin{array}{ccc|cc}
0 & \socle{Q_1} & \socle{Q_1} & 0 & 0 \\
0 & 0 & \socle{Q_1} & 0 & 0 \\
0 & 0 & \socle{Q_1} & 0 & 0 \\ \hline
0 & 0 & 0 & 0 & \socle{Q_2} \\
0 & 0 & 0 & 0 & 0
\end{array} \right) \cd
\]
The factor algebra $P/X$ is a left Harada algebra\cd
\end{exam}
\bigskip

\section{Main theorem}
In this section\kd we first recall basic definitions on presentations of algebras by using quivers with relations 
(we refer to \cite[ChapterII]{ASS} for the detail)\cd
Next we state the main result of this paper which describes 
quivers with relations of block extension algebras of $R$ by using those of $R$\cd

Let $R$ be a basic finite dimensional algebra over an algebraically closed field $K$\cd
We use the notations in Definition \ref{block_ext}\cd 
The Jacobson radical of $R$ is given by
\[
\jac{R}=\left( \begin{array}{cccc}
\jac{Q_1} & A_{12} & \cdots & A_{1m} \\
A_{21}  & \jac{Q_2} & \cdots & A_{2m} \\
\vdots & \vdots & \ddots & \vdots \\
A_{m1} & A_{m2} & \cdots & \jac{Q_m}
\end{array}
\right)
\]
(see \cite{AF})\cd
Hence we have
\begin{equation}\label{J(R)^2}
\jac{R}^2=\left( \begin{array}{ccc}
X_{11} & \cdots & X_{1m} \\
\vdots &        & \vdots \\
X_{m1} & \cdots & X_{mm}
\end{array} \right) \kd
\end{equation}
where
\[
X_{ij}= \begin{cases}
\jac{Q_i}^2+\sum_{k \neq i} A_{ik}A_{ki} & (i=j) \kd \\
\jac{Q_i}A_{ij}+A_{ij}\jac{Q_j}+\sum_{k \neq i,j} A_{ik}A_{kj} & (i \neq j) \cd
\end{cases}
\]
We put
\[
d_{ij}= \begin{cases}
\dim_K \jac{Q_i}/X_{ii} & (i=j) \kd \\
\dim_K A_{ij}/X_{ij} & (i \neq j) \cd
\end{cases}
\]
Then the quiver $Q$ of $R$ is defined as follows\cd
\begin{itemize}
\item[(a)] The set of vertices of $Q$ is $\{ 1,2,\cdots,m \}$\cd
\item[(b)] There are $d_{ij}$ arrows $\{ \alpha^1_{ij},  \alpha^2_{ij},\cdots,\alpha^{d_{ij}}_{ij} \}$ from $i$ to $j$\cd
\end{itemize}

We define the path algebra of $Q$\cd Let $KQ$ be a $K$-vector space with the basis consisting of all paths in $Q$\cd
We define multiplication of $KQ$ as follows\cd
For any paths $\alpha=\alpha_1 \cdots \alpha_s \kd \beta=\beta_1 \cdots \beta_t \ (\alpha_k \kd \beta_l$ are arrows)\kd
we put $i$ the end vertex of $\alpha_k$ and put $j$ the starting vertex of $\beta_1$\kd then we define
\[
\alpha \beta = \begin{cases}
\alpha_1 \cdots \alpha_s \beta_1 \cdots \beta_t & (i=j) \kd \\
0 & (i \neq j) \cd
\end{cases}
\]
We extend the multiplication linearly to $KQ$\cd 
Then we have a $K$-algebra $KQ$ called the \emph{path algebra} of $Q$\cd
We call $p \in KQ$ \emph{basic} if all paths appearing in $p$ have the same source and target\cd

We fix a basis $\{x_{ij}^1 \kd x_{ij}^2 \kd \cdots \kd x_{ij}^{d_{ij}}\}$ of $K$-vector spaces $\jac{Q_i}/X_{ii}$ and $A_{ij}/X_{ij}$\cd
By the universal property of path algebras\kd we can define a $K$-algebra homomorphism $\varphi:KQ \longrightarrow R$ by
\begin{eqnarray*}
kQ \ni i & \longmapsto & \tanni{1}_{ii} \in R \kd \\
kQ \ni \alpha^t_{ij}  & \longmapsto & \tanni{x^t_{ij}}_{ij} \in R \cd
\end{eqnarray*}
Then $\varphi$ is surjective\cd
Consequently\kd we have a $K$-algebra isomorphism
\[
KQ/\kernel{\varphi} \simeq R \cd
\]
We fix a set of generators $\{ \rho_1,\cdots,\rho_w \}$ of the two-sided ideal $\kernel{\varphi}$ of $R$\cd
We can assume that $\rho_i$ is basic for any $i$\cd

Now we state our main results which give a quiver with relations of a block extension algebra of $R$\cd

\begin{thm}\label{main_thm}
Let $n_1,\cdots,n_m \in \mathbb{N}$ and put $P=R(n_1,\cdots,n_m)$\cd
The quiver $Q'$ of $P$ is given as follows\cd
\begin{itemize}
\item[(a)] The set of vertices of $Q'$ is $\{ (i,j) \ | \ 1 \leq i \leq m \kd 1 \leq j \leq n_i \}$\cd
\item[(b)] There are the following two kinds of arrows\cd
\begin{list}{$\bullet$}{}
\item An arrow $\delta_{ij}$ from $(i,j)$ to $(i,j+1)$\cd
\item $d_{is}$ arrows $\{ \beta^1_{is},\beta^2_{is}, \cdots,  \beta^{d_{is}}_{is} \}$ from $(i,n_i)$ to $(s,1)$\cd
\end{list}
\end{itemize}
\end{thm}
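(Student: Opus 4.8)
The plan is to compute the quiver $Q'$ of $P = R(n_1,\dots,n_m)$ directly from its definition: the vertices are the primitive idempotents $f_{ij} = \tanni{1}_{ij,ij}$, and the number of arrows from $(i,j)$ to $(s,t)$ is $\dim_K \left( f_{ij}\jac{P}f_{st} \big/ f_{ij}\jac{P}^2 f_{st} \right)$. So the first step is to identify $\jac{P}$. Since $P$ is a matrix ring with diagonal blocks $P(i,i)$ containing the local algebras $Q_i$ on their own diagonal, and since by \cite{AF} the radical of such a ring is obtained by replacing each diagonal entry $Q_i$ by $\jac{Q_i}$ and leaving off-diagonal entries untouched, I expect $f_{ij}\jac{P}f_{st}$ to equal: $\jac{Q_i}$ when $(s,t)=(i,j)$; $Q_i$ when $s=i$ and $t>j$; $\jac{Q_i}$ when $s=i$ and $t<j$; and $A_{is}$ when $s \neq i$. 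Equivalently $P_{ij,st}$ for $(i,j)\neq(s,t)$ and $\jac{Q_i}$ on the diagonal.

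The second step is to compute $f_{ij}\jac{P}^2 f_{st}$ by expanding $\jac{P}\cdot\jac{P}$ as a matrix product $\sum_{(k,l)} f_{ij}\jac{P}f_{kl}\cdot f_{kl}\jac{P}f_{st}$ and comparing with the above. The key structural observation I would isolate as the heart of the argument is that the "internal" arrows $\delta_{ij}\colon (i,j)\to(i,j+1)$ survive: one checks that $f_{ij}\jac{P}f_{i,j+1} = Q_i$ while $f_{ij}\jac{P}^2 f_{i,j+1} = \jac{Q_i}$ (the square already picks up a radical factor because any length-two path through another vertex $(i,l)$ or $(k,l)$ with $k\neq i$ lands you in $\jac{Q_i}$), so this quotient is $Q_i/\jac{Q_i} \cong K$, giving exactly one arrow. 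Similarly, for a "return" arrow one needs $s\neq i$ or ($s=i$ and the only way from $(i,n_i)$); concretely $f_{i,n_i}\jac{P}f_{s,1} = A_{is}$ (for $s\neq i$) while $f_{i,n_i}\jac{P}^2 f_{s,1}$ collects $\jac{Q_i}A_{is} + A_{is}\jac{Q_s} + \sum_{k\neq i,s}A_{ik}A_{ks}$ — note the crucial point that passing through an intermediate vertex $(i,l)$ with $l>1$ contributes $Q_i\cdot A_{is} = A_{is}$, so this would kill the arrow, which is why the return arrows must emanate from $(i,n_i)$ (from which there is no further internal step) and land at $(s,1)$ (to which there is no internal step from within column $s$ that does not itself start at $(s,1)$). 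That quotient is precisely $A_{is}/X_{is}$, of dimension $d_{is}$ when $i\neq s$, and $\jac{Q_i}/X_{ii}$ of dimension $d_{ii}$ when $s=i$, matching \eqref{J(R)^2}. For all other pairs $(i,j)\to(s,t)$ one verifies $f_{ij}\jac{P}f_{st} = f_{ij}\jac{P}^2 f_{st}$, so there is no arrow.

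The main obstacle will be the bookkeeping in the case analysis for $f_{ij}\jac{P}^2 f_{st}$: there are several regimes depending on whether $i=s$, whether $j<t$, $j=t$ or $j>t$, and one must carefully track which intermediate vertices $(k,l)$ contribute a full $Q_i$ (and hence no radical) versus only $\jac{Q_i}$ or a module $A_{ik}$ — the decisive inputs being that $Q_i$ is local (so $Q_i^2 = Q_i$, $Q_i\jac{Q_i} = \jac{Q_i}$) and that $R$ itself being an algebra forces the identities behind \eqref{J(R)^2}. Once the four subcases are dispatched, (a) and (b) follow by reading off dimensions. I would present the radical computation as a short lemma, then organize the square computation into a single displayed case distinction, and finally translate dimensions into arrow counts.
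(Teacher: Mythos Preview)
Your proposal is correct and follows essentially the same route as the paper: identify $\jac{P}$ by replacing each diagonal $Q_i$ with $\jac{Q_i}$, compute $\jac{P}^2$ entrywise via the matrix product $\sum_{k,l} J_{ij,kl}J_{kl,st}$ with a case split on $i=s$ versus $i\neq s$, and read off the arrow multiplicities from the resulting quotients $J_{ij,st}/Y_{ij,st}$. The paper simply displays the full blocks $J(i,s)$, $Y(i,s)$ and $J(i,s)/Y(i,s)$ explicitly rather than phrasing the argument in terms of which intermediate vertices ``kill'' an arrow, but the content is identical.
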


To describe relations for $P$\kd we have to define the extension map\cd

\begin{define}\label{extension_map}
Let $KQ_+$ (respectively\kd $KQ_+'$) be a $K$-subspace of $KQ$ (respectively\kd $KQ'$) generated by all paths of length
$\geq 1$\cd
Using $\delta_{ij}$ defined in Theorem \ref{main_thm}\kd we put
\[
\delta_i = \delta_{i1} \delta_{i2} \cdots \delta_{in_i-1} \in KQ' \cd
\]

We define a $K$-linear map $e:KQ_+ \longrightarrow KQ_+'$ called the \emph{extension map}\kd as follows\cd
Any path
\[
p=\alpha^{t_1}_{i_1i_2} \alpha^{t_2}_{i_2i_3} \cdots
\alpha^{t_k}_{i_ki_{k+1}}
\]
of $KQ$ corresponds to the path
\[
e(p)=\beta^{t_1}_{i_1i_2} \delta_{i_2}  \beta^{t_2}_{i_2i_3} \cdots
\cdots \beta^{t_{k-1}}_{i_{k-1}i_{k}} \delta_{i_k}
\beta^{t_k}_{i_ki_{k+1}}
\]
of $KQ'$\cd
Clearly $e$ is an injection\cd
\end{define}

Then we can describe relations for $P$\cd

\begin{thm}\label{main_thm2}
Under the hypotheses of Theorem \ref{main_thm}\kd we have a $K$-algebra isomorphism
\[
P \simeq KQ'/(e(\rho_1),\cdots,e(\rho_w))\cd
\]
\end{thm}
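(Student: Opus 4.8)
The plan is to build an explicit surjective $K$-algebra homomorphism $\psi\colon KQ'\to P$, to check it annihilates every $e(\rho_l)$, and then to identify $\kernel{\psi}$ with the two-sided ideal $I:=(e(\rho_1),\cdots,e(\rho_w))$ of $KQ'$ by comparing dimensions. By the universal property of path algebras I would define $\psi$ on vertices and arrows by $(i,j)\mapsto f_{ij}=\tanni{1}_{ij,ij}$, $\delta_{ij}\mapsto\tanni{1}_{(i,j),(i,j+1)}$, and $\beta^t_{is}\mapsto\tanni{x^t_{is}}_{(i,n_i),(s,1)}$, with $x^t_{is}$ the element of $A_{is}/X_{is}$ (resp.\ $\jac{Q_i}/X_{ii}$) already used to define $\varphi$. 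A direct matrix computation then gives, for a path $p=\alpha^{t_1}_{i_1i_2}\cdots\alpha^{t_k}_{i_ki_{k+1}}$ in $Q$, that $\psi(e(p))=\tanni{\overline{p}}_{(i_1,n_{i_1}),(i_{k+1},1)}$, where $\overline{p}\in e_{i_1}Re_{i_{k+1}}$ is the unique possibly nonzero entry of $\varphi(p)$: each inserted $\delta_{i_r}$ maps to the identity of $Q_{i_r}$ and is absorbed by the product. Surjectivity follows at once: the $\delta$-products realise all $\tanni{1}_{(i,j),(i,l)}$ with $j\le l$, and multiplying the elements $\psi(e(p))$ by the matrices $\tanni{1}_{(i,j),(i,n_i)}$ on the left and $\tanni{1}_{(s,1),(s,l)}$ on the right realises all $\tanni{a}_{(i,j),(s,l)}$ with $a\in A_{is}$ (for $i\ne s$) or $a\in\jac{Q_i}$ (for $i=s$); since $Q_i=K\,1_{Q_i}\oplus\jac{Q_i}$ as $R$ is basic, all entries of $P$ are obtained. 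Finally, if $\rho_l$ is basic from $i$ to $s$, writing $\rho_l$ as a combination of paths (necessarily of length $\ge 1$) and using the formula for $\psi(e(p))$ together with $\varphi(\rho_l)=0$ yields $\psi(e(\rho_l))=\tanni{y}_{(i,n_i),(s,1)}$ with $y$ the relevant entry of $\varphi(\rho_l)$, so $y=0$ and $\psi(e(\rho_l))=0$. Hence $\psi$ induces a surjection $\overline{\psi}\colon KQ'/I\to P$.

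The crux is the inclusion $e(\kernel{\varphi})\subseteq I$. Although $e$ is not an algebra map, applying it to a concatenation of paths merely inserts one $\delta$ at each new interior vertex, so for composable paths $u,v$ in $Q$ with $u$ ending at a vertex $j$ one has $e(uv)=e(u)\,\delta_j\,e(v)$, with $e(u)\delta_j$ (resp.\ $\delta_j e(v)$) omitted when $u$ (resp.\ $v$) is trivial. Extending this identity by linearity so as to allow a linear combination in place of a factor, one gets $e(u\rho_l v)=e(u)\,\delta_j\,e(\rho_l)\,\delta_{j'}\,e(v)\in I$ for all paths $u,v$, where $j,j'$ are the source and target of $\rho_l$. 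Since $\kernel{\varphi}=(\rho_1,\cdots,\rho_w)$, expanding an arbitrary element as a $K$-combination of such $u\rho_l v$ and using linearity of $e$ gives $e(\kernel{\varphi})\subseteq I$. I would also record the shape of paths in $Q'$: the only arrow leaving $(i,j)$ with $j<n_i$ is $\delta_{ij}$, and the arrows leaving $(i,n_i)$ are the $\beta^t_{is}$, so every path in $Q'$ is either a string $\delta_{[i,j,l]}:=\delta_{ij}\delta_{i,j+1}\cdots\delta_{i,l-1}$ (with $j\le l$) inside a single block, or of the form $\delta_{[i,j,n_i]}\,e(p)\,\delta_{[s,1,l]}$ for a genuine path $p$ of length $\ge 1$ in $Q$ from $i$ to $s$.

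For the dimension count, fix a set $\mathcal{B}=\bigsqcup_{i,s}\mathcal{B}_{is}$ of paths in $Q$ such that $\{\varphi(p)\mid p\in\mathcal{B}_{is}\}$ is a $K$-basis of $e_iRe_s$ and $e_i\in\mathcal{B}_{ii}$; put $\mathcal{B}^+_{is}=\mathcal{B}_{is}$ for $i\ne s$ and $\mathcal{B}^+_{ii}=\mathcal{B}_{ii}\setminus\{e_i\}$. Let $\mathcal{C}$ be the set of paths $\delta_{[i,j,l]}$ (for $j\le l$) together with the paths $\delta_{[i,j,n_i]}\,e(p)\,\delta_{[s,1,l]}$ for $p\in\mathcal{B}^+_{is}$, $1\le j\le n_i$, $1\le l\le n_s$. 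For a path $\pi=\delta_{[i,j,n_i]}\,e(p)\,\delta_{[s,1,l]}$ of the second type above, expressing $\varphi(p)$ in the chosen basis gives $p\equiv\sum_q c_q q$ modulo $\kernel{\varphi}$ with $q$ ranging over $\mathcal{B}^+_{is}$ (no trivial path occurs, since $p$ has length $\ge 1$); applying $e$ and invoking $e(\kernel{\varphi})\subseteq I$ gives $e(p)\equiv\sum_q c_q e(q)$ modulo $I$, whence the class of $\pi$ lies, modulo $I$, in the span of $\mathcal{C}$. So the image of $\mathcal{C}$ spans $KQ'/I$ and $\dim_K KQ'/I\le|\mathcal{C}|$. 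Conversely, $\psi$ carries $\mathcal{C}$ bijectively onto the $K$-basis of $P$ obtained by placing, entrywise in the block-matrix form of $P$, the bases $\{\varphi(p)\mid p\in\mathcal{B}_{is}\}$ of the $e_iRe_s$ and the induced bases $\{1_{Q_i}\}\cup\{\varphi(p)\mid p\in\mathcal{B}^+_{ii}\}$ of $Q_i$ and $\{\varphi(p)\mid p\in\mathcal{B}^+_{ii}\}$ of $\jac{Q_i}$ (a routine check, position by position), so $|\mathcal{C}|=\dim_K P$. Combined with surjectivity of $\overline{\psi}$, this forces $\dim_K KQ'/I=\dim_K P$, hence $\overline{\psi}$ is an isomorphism, which is the assertion.

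The main obstacle is the key lemma and the bookkeeping resting on it. One must pin down exactly the twisted multiplicativity $e(uv)=e(u)\delta_j e(v)$ of the non-multiplicative map $e$, including the degenerate cases where $u$ or $v$ is trivial, in order to deduce $e(\kernel{\varphi})\subseteq I$; and one must then verify carefully that $\psi$ restricted to $\mathcal{C}$ is a bijection onto the block-matrix basis of $P$ described above --- in particular, that $\psi|_{\mathcal{C}}$ is injective with exactly that image. By contrast, the construction of $\psi$, its surjectivity, and the vanishing $\psi(e(\rho_l))=0$ are routine once the formula for $\psi(e(p))$ is established.
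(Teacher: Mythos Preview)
Your argument is correct, and it shares with the paper the same surjection $\psi=\varphi'\colon KQ'\to P$, the same formula $\psi(e(p))=\tanni{r}_{(i,n_i),(s,1)}$ with $r$ the nonzero entry of $\varphi(p)$, the same twisted multiplicativity $e(uv)=e(u)\,\delta_j\,e(v)$, and the same structural description of paths in $Q'$ as $\delta$-strings sandwiching an element of $\image{e}$. Where you diverge is in the final identification of $\kernel{\psi}$ with $I$. The paper argues directly: since every basic element of $\kernel{\varphi'}$ factors as $\delta\text{'s}\cdot p''\cdot\delta\text{'s}$ with $p''\in\image{e}$, and the $\varphi'(\delta)$'s are elementary matrices with entry~$1$ (so cannot kill a nonzero element by left/right multiplication), one gets $p''\in\kernel{\varphi'}\cap\image{e}$; hence $\kernel{\varphi'}$ is \emph{generated} by $\kernel{\varphi'}\cap\image{e}=e(\kernel{\varphi})$, and the latter lies in $I$ by the twisted multiplicativity. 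You instead bound $\dim_K KQ'/I$ from above by exhibiting the spanning set $\mathcal{C}$, and match it to $\dim_K P$ via the explicit block-matrix basis. The paper's route is shorter and basis-free, avoiding the bookkeeping of $\mathcal{C}$; your route is more constructive and yields, as a byproduct, an explicit $K$-basis of $P\simeq KQ'/I$ indexed by paths, which can be useful for later computations. Either way the crux is the same pair of lemmas you identified: the shape of paths in $Q'$ and the identity $e(uv)=e(u)\,\delta_j\,e(v)$.
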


\begin{exam}\label{exam_main_thm}
Let $R$ be a $K$-algebra given by the following quiver with relations\cd
\[
\xymatrix{
1 \ar@(dl,ul)[]^{\alpha_{11}}   \ar@<0.4ex>[rr]^{\alpha_{12}} & & 2 \ar@<0.4ex>[ll]^{\alpha_{21}}
}
\quad \quad
\begin{cases}
\alpha^3_{11}=\alpha_{12}\alpha_{21} \\
\alpha_{11} \alpha_{12} = 0 \\
\alpha_{21} \alpha_{11} = 0
\end{cases}
\]
Then $R$ can be represented as the following matrix form\cd
\[
R = \left( \begin{array}{cc}
K[\alpha_{11}]/(\alpha_{11})^4 & K \alpha_{12} \\
K  \alpha_{21} & K[\alpha_{21}\alpha_{12}]/(\alpha_{21}\alpha_{12})^2
\end{array} \right)
\]

For example\kd $R(3,2)$ can be represented as the followng matrix form\cd
\[
{\tiny
R(3,2) = \left( \begin{array}{ccc|cc}
K[\alpha_{11}]/(\alpha_{11})^4 & K[\alpha_{11}]/(\alpha_{11})^4 & K[\alpha_{11}]/(\alpha_{11})^4
& K \alpha_{12} & K \alpha_{12}  \\
(\alpha_{11})/(\alpha_{11})^4 & K[\alpha_{11}]/(\alpha_{11})^4 & K[\alpha_{11}]/(\alpha_{11})^4
& K \alpha_{12} & K \alpha_{12}  \\
(\alpha_{11})/(\alpha_{11})^4 & (\alpha_{11})/(\alpha_{11})^4 & K[\alpha_{11}]/(\alpha_{11})^4
& K \alpha{12} & K \alpha{12}  \\ \hline
K \alpha_{21} & K \alpha_{21} & K \alpha_{21}
& K[\alpha_{21}\alpha_{12}]/(\alpha_{21}\alpha_{12})^2 & K[\alpha_{21}\alpha_{12}]/(\alpha_{21}\alpha_{12})^2 \\
K \alpha_{21} & K \alpha_{21} & K \alpha_{21}
& (\alpha_{21}\alpha_{12})/(\alpha_{21}\alpha_{12})^2 & K[\alpha_{21}\alpha_{12}]/(\alpha_{21}\alpha_{12})^2
\end{array} \right) \cd
}
\]
The quiver with relations of $R(3,2)$ is given by following\cd
\[
\xymatrix{
& (1,2) \ar[rd]^{\delta_{12}} & \\
(1,1) \ar[ru]^{\delta_{11}} & & (1,3) \ar[d]^{\beta_{12}} \ar[ll]^{\beta_{11}}  \\
(2,2) \ar[u]^{\beta_{21}} & & (2,1) \ar[ll]_{\delta_{21}}
}
\]
\[
\begin{cases}
e(\alpha^3_{11})=\beta_{11} \delta_{11} \delta_{12} \beta_{11} \delta_{11} \delta_{12} \beta_{11}=\beta_{12} \delta_{21} \beta_{21}=e(\alpha_{12}\alpha_{21}) \\
e(\alpha_{11} \alpha_{12})=\beta_{11} \delta_{11} \delta_{12} \beta_{12} = 0 \\
e(\alpha_{21} \alpha_{11})=\beta_{21} \delta_{11} \delta_{12} \beta_{11} = 0
\end{cases}
\]
\end{exam}
\bigskip

\section{Proof of Main theorem}
In this section\kd we keep the notations in the previous section\cd 
We prove the Theorem \ref{main_thm} and \ref{main_thm2}\cd

Let $P$ be a block extension $R(n_1,\cdots,n_m)$\cd 
The Jacobson radical of $P$ is given by
\[
\jac{P}=\left( \begin{array}{cccc}
J(1,1) & J(1,2) & \cdots & J(1,m) \\
J(2,1) & J(2,2) & \cdots & J(2,m) \\
\vdots & \vdots & \ddots & \vdots \\
J(m,1) & J(m,2) & \cdots & J(m,m)
\end{array} \right) \cd
\]
\[
P_{ij,st} \supset J_{ij,st}= \begin{cases}
Q_i & (i=s \kd j+1 \leq t) \kd \\
\jac{Q_i} & (i=s \kd j+1>t) \kd \\
A_{is} & (i \neq s) \kd
\end{cases}
\]
where we use the notation in Definition \ref{block_ext}\cd
\begin{eqnarray*}
P(i,s) \supset J(i,s) &=&
\left( \begin{array}{ccc}
J_{i1,s1} & \cdots & J_{i1,sn_s} \\
\vdots & & \vdots \\
J_{in_i,s1} & \cdots & J_{in_i,sn_s}
\end{array} \right) \\
&=& \begin{cases}
\left( \begin{array}{ccccc}
\jac{Q_i} & Q_i & \cdots & \cdots & Q_i \\
 & \jac{Q_i} & Q_i & \cdots & Q_i \\
 &  & \ddots & \ddots & \vdots \\
 &  &  & \jac{Q_i} & Q_i\\
\jac{Q_i} &  &  &  & \jac{Q_i}
\end{array} \right) & (i=s) \kd \\
\left( \begin{array}{ccc}
A_{is} & \cdots & A_{is} \\
\vdots &        & \vdots \\
A_{is} & \cdots & A_{is}
\end{array} \right) & (i \neq s) \cd
\end{cases}
\end{eqnarray*}
Now let us calculate $\jac{P}^2$\cd We denote by $Y_{ij,st}$ the $(ij,st)$-entry of $\jac{P}^2$\cd \\
(I) Case $i=s$\cd
\begin{eqnarray*}
Y_{ij,it} &=& \sum_{k=1}^{m} \sum_{l=1}^{n_k} J_{ij,kl} J_{kl,it} \\
&=& \sum_{l=1}^{n_i} J_{ij,il} J_{il,it} + \sum_{k=1,k \neq i}^{m}
\sum_{l=1}^{n_k} J_{ij,kl} J_{kl,it} \\
&=& \begin{cases}
Q_i & (j+1 < t) \kd \\
\jac{Q_i} & (j+1 \geq t\kd (j,t) \neq (n_i,1)) \kd \\
\jac{Q_i}^2+\sum_{k=1,k \neq i}^{m} A_{ik}A_{ki} = X_{ii} & ((j,t)=(n_i,1)) \kd
\end{cases}
\end{eqnarray*}
where $X_{ii}$ is in \eqref{J(R)^2}\cd  \\
(II) Case $i \neq s$\cd
\begin{eqnarray*}
Y_{ij,st} &=& \sum_{k=1}^{m} \sum_{l=1}^{n_k} J_{ij,kl} J_{kl,st} \\
&=& \sum_{l=1}^{n_i} J_{ij,il} J_{il,st} + \sum_{l=1}^{n_j} J_{ij,jl}
J_{jl,st}  + \sum_{k=1,k \neq i,j}^{m} \sum_{l=1}^{n_k} J_{ij,kl}
J_{kl,st} \\
&=& \begin{cases}
A_{is} & ((j,t) \neq (n_i,1)) \kd \\
\jac{Q_i}A_{is}+A_{is}\jac{Q_s}+\sum_{k=1,k \neq i,j}^{m} A_{ik}A_{ks}=X_{is} & ((j,t)=(n_i,1)) \cd
\end{cases}
\end{eqnarray*}
where $X_{is}$ is in \eqref{J(R)^2}\cd  \\

Therefore
\begin{eqnarray*}
P(i,s) \supset Y(i,s) &=&
\left( \begin{array}{ccc}
Y_{i1,s1} & \cdots & Y_{i1,sn_s} \\
\vdots & & \vdots \\
Y_{in_i,s1} & \cdots & Y_{in_i,sn_s}
\end{array} \right) \\
&=& \begin{cases}
\left( \begin{array}{ccccc}
\jac{Q_i} & \jac{Q_i} &  &  & Q_i \\
\vdots  & \jac{Q_i} & \jac{Q_i} &  & \\
\vdots  &  & \ddots & \ddots &  \\
\jac{Q_i}  & \jac{Q_i} &  & \jac{Q_i} & \jac{Q_i} \\
X_{ii} & \jac{Q_i}  & \cdots & \cdots  & \jac{Q_i}
\end{array} \right) & (i=s) \kd \\
\left( \begin{array}{cccc}
A_{is} & A_{is} & \cdots & A_{is} \\
\vdots & \vdots &        & \vdots \\
A_{is} & A_{is} & \cdots & A_{is} \\
X_{ij} & A_{is} & \cdots & A_{is}
\end{array} \right) & (i \neq s) \cd
\end{cases}
\end{eqnarray*}
\[
\jac{P}^2=\left( \begin{array}{cccc}
Y(1,1) & Y(1,2) & \cdots & Y(1,m) \\
Y(2,1) & Y(2,2) & \cdots & Y(2,m) \\
\vdots & \vdots & \ddots & \vdots \\
Y(m,1) & Y(m,2) & \cdots & Y(m,m)
\end{array} \right) \cd
\]
Thus we have
\[
J(i,s)/Y(i,s)=\begin{cases}
\left( \begin{array}{cccccc}
0  & Q_i/\jac{Q_i} & 0  & \cdots  & 0  \\
\vdots  & 0  & Q_i/\jac{Q_i} & 0 &   \vdots  \\
\vdots  &  & \ddots & \ddots  & 0  \\
0  & 0  &   & 0 & Q_i/\jac{Q_i} \\
\jac{Q_i}/X_{ii} & 0  & \cdots & \cdots  & 0
\end{array} \right) & (i=s) \kd \\
\left( \begin{array}{cccc}
0 & 0  & \cdots & 0  \\
\vdots & \vdots &  & \vdots \\
0 & 0  & \cdots & 0  \\
A_{is}/X_{is} & 0 & \cdots & 0
\end{array} \right) & (i \neq s) \cd
\end{cases}
\]
Consequently the quiver $Q'$ of $P$ is given by Theorem \ref{main_thm}\cd \hfill{$\square$}
\bigskip

Next\kd we shall prove Theorem \ref{main_thm2}\cd 
Define a $K$-algebra homomorphism $\varphi':KQ' \longrightarrow P$ by
\begin{eqnarray*}
KQ' \ni (i,j) & \longmapsto & \tanni{1}_{ij,ij} \in P \kd \\
KQ' \ni \delta_{ij}  & \longmapsto & \tanni{1}_{ij,ij+1} \in P \kd \\
KQ' \ni \beta^t_{is} & \longmapsto & \tanni{x^t_{is}}_{in_i,s1} \in P \cd
\end{eqnarray*}
Then $\varphi'$ is surjective\cd Consequently\kd we have a $K$-algebra isomorphism
\[
KQ'/\kernel{\varphi'} \simeq P \cd
\]
We only have to show that the two-sided ideal $\kernel{\varphi'}$ of $P$ is generated by the $e(\rho_1),\cdots,e(\rho_w)$\cd
Let us start with giving elementary properties of the extension map\cd

\begin{lemma}\label{lem_of_ext_map}
For the extension map $e:KQ_+ \longrightarrow KQ_+'$\kd the following hold\cd
\begin{enumerate}
\def\labelenumi{(\theenumi)}
\item The two-sided ideal $\kernel{\varphi'}$ of $P$ is generated by elements in $\image{e}$\cd
\item For any $p \in KQ_+$\kd $\varphi(p)=0$ if and only if
$\varphi'(e(p))=0$\cd
\item For any path $p \in KQ_+$ ending at $j$ and any path $q \in KQ_+$ starting at $j$\kd we have
\[
e(pq)=e(p)\delta_{j} e(q)
\]
where $\delta_{j}$ is defined in Definition \ref{extension_map}\cd
\end{enumerate}
\end{lemma}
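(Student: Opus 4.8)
The plan is to prove the three items in the order (3), (2), (1), since the proof of (1) needs the others. Item (3) is immediate from Definition \ref{extension_map}: for paths $p=\alpha^{t_1}_{i_1i_2}\cdots\alpha^{t_k}_{i_kj}$ and $q=\alpha^{s_1}_{jl_2}\cdots\alpha^{s_r}_{l_rl_{r+1}}$ the word $e(pq)$ is obtained from $e(p)$ and $e(q)$ by inserting the factor $\delta_j$ between the last $\beta$ coming from $p$ and the first $\beta$ coming from $q$, so $e(pq)=e(p)\delta_je(q)$ (if $n_j=1$ this $\delta_j$ is the trivial path at $(j,1)$ and the identity still holds). Item (2) is a direct computation: for a path $p\colon i\to s$ in $Q$ one gets $\varphi(p)=\tanni{y}_{is}$, where $y$ is the product in $R$ of the chosen lifts $x^{t_a}_\bullet$, with $y\in A_{is}$ if $i\neq s$ and $y\in e_i\jac{R}e_i\subseteq\jac{Q_i}$ if $i=s$. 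Running the same computation in $P$ and using that $\varphi'(\delta_i)$ telescopes to $\tanni{1}_{i1,in_i}$ gives $\varphi'(e(p))=\tanni{y}_{in_i,s1}$ — the same element $y$, now in the entry $P_{in_i,s1}$, which is $A_{is}$ for $i\neq s$ and $\jac{Q_i}$ (or $Q_i$, when $n_i=1$) for $i=s$. In each case the element is zero iff $y=0$, so $\varphi(p)=0$ iff $\varphi'(e(p))=0$; grouping an arbitrary element of $KQ_+$ by source and target extends the equivalence by linearity of $\varphi$, $\varphi'$ and $e$.

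For item (1) I would first describe all paths in $Q'$. Because the only way to leave a block $\{(i,1),\dots,(i,n_i)\}$ is to climb to $(i,n_i)$ via the $\delta$'s and then jump via a $\beta$, every path in $Q'$ is either (a) a subpath of some $\delta_i$ (vertices and single arrows $\delta_{ij}$ included), or (b) of the form $u\,e(p)\,v$, with $u$ the unique path $(i,j)\to(i,n_i)$, $v$ the unique path $(s,1)\to(s,t)$, and $p$ a path $i\to s$ in $Q$ (uniquely determined, as $e$ is injective). Hence $KQ'=W\oplus V$, where $W$ is spanned by the type-(a) paths and $V$ by the type-(b) ones, and $\image{e}\subseteq V$. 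Now two observations pin down $\kernel{\varphi'}$: (i) $\varphi'|_W$ is injective, since the type-(a) paths map to the linearly independent matrix units $\tanni{1}_{ij,il}$ ($j\leq l$); (ii) $\varphi'(W)\cap\varphi'(V)=0$, since the off-diagonal blocks of elements of $\varphi'(W)$ vanish while on a diagonal block $(i,*)$ an element of $\varphi'(W)$ has entries in $K\cdot 1$ and an element of $\varphi'(V)$ has entries in $\jac{Q_i}$ (by the computation in (2)), and $K\cdot 1\cap\jac{Q_i}=0$. From (i) and (ii): if $f=w+v\in\kernel{\varphi'}$ with $w\in W$, $v\in V$, then $\varphi'(w)=-\varphi'(v)\in\varphi'(W)\cap\varphi'(V)=0$, so $w=0$; thus $\kernel{\varphi'}\subseteq V$.

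To finish (1) I would take $f\in\kernel{\varphi'}\subseteq V$ and group its type-(b) paths by source and target, writing $f=\sum_{(i,j),(s,t)}u_{ij}\,e(g_{ij,st})\,v_{st}$ with $g_{ij,st}\in KQ_+$ a combination of paths $i\to s$. Since distinct pairs $((i,j),(s,t))$ occupy distinct matrix positions, $\varphi'(f)=0$ forces $\varphi'(u_{ij}\,e(g_{ij,st})\,v_{st})=0$ for each pair; by the computation in (2) this element equals $\tanni{y}_{ij,st}$ where $\varphi(g_{ij,st})=\tanni{y}_{is}$, so $\varphi(g_{ij,st})=0$, and hence $\varphi'(e(g_{ij,st}))=0$ by (2). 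Thus every $e(g_{ij,st})$ lies in $\image{e}\cap\kernel{\varphi'}$, so $f$ lies in the two-sided ideal it generates; the reverse inclusion is clear, proving (1). The main obstacle is observation (ii) above — that $\kernel{\varphi'}$ has no component along the intra-block paths $W$. Everything else is bookkeeping on matrix entries and on the definition of $e$; (ii) is the single place where the ``staircase'' shape of the diagonal blocks of a block extension (units weakly above the diagonal, radical strictly below) is genuinely used.
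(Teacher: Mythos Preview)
Your proof is correct and, at its core, follows the paper's strategy for (1): every path in $Q'$ containing at least one $\beta$ factors as a $\delta$-prefix, an element of $\image{e}$, and a $\delta$-suffix, and since the $\delta$-strings map to matrix units $\tanni{1}$ one can strip them off without leaving $\kernel{\varphi'}$. The paper's proof of (1) simply asserts that any basic $p'\in\kernel{\varphi'}$ admits such a factorisation $p'=\delta_{ij}\cdots\delta_{i,n_i-1}\,p''\,\delta_{s1}\cdots\delta_{s,t-1}$ with $p''\in\image{e}$, and then reads off $\varphi'(p'')=0$. Your $W\oplus V$ decomposition together with observations (i)--(ii) makes explicit the one point the paper leaves tacit, namely that a basic element of $\kernel{\varphi'}$ cannot carry a nonzero component along an intra-block path (because such paths map to $\tanni{1}_{ij,it}$ with entry in $K\cdot 1$, while the $V$-part lands in $\jac{Q_i}$ on diagonal blocks, and these meet only in $0$). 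So your route is a more carefully organised version of the same argument rather than a genuinely different one; the extra structure you introduce is precisely what justifies the factorisation the paper writes down in its first line. Parts (2) and (3) in your proposal match the paper's computations essentially verbatim.
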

\begin{proof}
(1) We take $p' \in \kernel{\varphi'}$\cd We can assume that $p'$ is basic\cd 
By definition of quiver $Q'$\kd we can write $p'$ as follows\cd
\[
p'=\delta_{ij} \cdots \delta_{in_i} p'' \delta_{s1} \cdots \delta_{st} \quad (p'' \in \image{e})\cd
\]
We have
\begin{eqnarray*}
\varphi'(p') &=& \varphi'(\delta_{ij} \cdots \delta_{in_i} p'' \delta_{s1} \cdots \delta_{st})\\
&=& \varphi'(\delta_{ij}) \cdots \varphi'(\delta_{in_i}) \varphi'( p'') \varphi'(\delta_{s1}) \cdots \varphi'(\delta_{st}) \\
&=& \tanni{1}_{ij,ij+1} \cdots \tanni{1}_{in_i-1,in_i} \varphi'( p'') 
\tanni{1}_{s1,s2} \cdots \tanni{1}_{st,st+1}\cd
\end{eqnarray*}
We have $\varphi'(p'')=0$ since $\varphi'(p')=0$\cd 
Consequently $p'' \in \kernel{\varphi'} \cap \image{e}$\cd
Thus the assertion follows\cd

(2) We take $KQ_+ \ni p=\alpha^{t_1}_{i_1i_2} \alpha^{t_2}_{i_2i_3}
   \cdots \alpha^{t_k}_{i_ki_{k+1}}$\cd Then we have
\begin{eqnarray*}
\varphi(p)&=&\tanni{x^{t_1}_{i_1i_2}}_{i_1i_2}
   \tanni{x^{t_2}_{i_2i_3}}_{i_2i_3} \cdots
   \tanni{x^{t_k}_{i_ki_{k+1}}}_{i_ki_{k+1}} \\
&=&\tanni{x^{t_1}_{i_1i_2} x^{t_2}_{i_2i_3} \cdots
   x^{t_k}_{i_ki_{k+1}} }_{i_1i_{k+1}} \cd
\end{eqnarray*}
On the other hand\kd we have
\begin{eqnarray*}
\varphi'(e(p))
&=& \varphi'(\beta^{t_1}_{i_1i_2} \delta_{i_2} \beta^{t_2}_{i_2i_3} \cdots \beta^{t_k}_{i_ki_{k+1}}) \\
&=& \varphi'(\beta^{t_1}_{i_1i_2}) \varphi'(\delta_{i_2}) \varphi'( \beta^{t_2}_{i_2i_3}) 
\cdots \varphi'(\beta^{t_k}_{i_ki_{k+1}}) \\
&=& \tanni{x^{t_1}_{i_1i_2}}_{i_1n_{i_1},i_21} \tanni{1}_{i_21,i_2n_{i_2}} 
\tanni{x^{t_2}_{i_2i_3}}_{i_2n_{i_2},i_3 1} \cdots \tanni{x^{t_k}_{i_ki_{k+1}}}_{i_kn_{i_k},i_{k+1} 1} \\
&=& \tanni{x^{t_1}_{i_1i_2} x^{t_2}_{i_2i_3} \cdots x^{t_k}_{i_ki_{k+1}} }_{i_1n_{i_1},i_{k+1}1}\cd
\end{eqnarray*}
Consequently for any basic element $p \in KQ_+$ from $i$ to $j$\kd there exists $r \in e_iRe_j$ such that
\begin{eqnarray*}
\varphi(p) &=& \tanni{r}_{ij} \\
\varphi'(e(p)) &=& \tanni{r}_{in_i,j1}\cd
\end{eqnarray*}
Thus $\varphi(p)=0$ if and only if $r=0$ if and only if $\varphi'(e(p))$\cd

(3) This follows from the definition of $e$\cd 
\end{proof}

Now we complete the proof of Theorem \ref{main_thm2}\cd 
Put $I'=(e(\rho_1),\cdots,e(\rho_w))$\cd We shall show
$I'=\kernel{\varphi'}$\cd
We have $I' \subset \kernel{\varphi'}$ by Lemma \ref{lem_of_ext_map} (2)\cd

By Lemma \ref{lem_of_ext_map} (1)\kd we only have to show $\kernel{\varphi'} \cap \image{e} \subset I'$\cd
Assume that $p \in KQ_+$ satisfies $e(p) \in \kernel{\varphi'}$\cd
By Lemma \ref{lem_of_ext_map} (2)\kd we have $p \in \kernel{\varphi}$\cd We can write
\[
p=a_1\rho_1b_1+a_2\rho_2b_2+\cdots+a_w\rho_wb_w \quad (a_i,b_i \in KQ)\cd
\]
We have
\[
e(p)=e(a_1\rho_1b_1)+e(a_2\rho_2b_2)+\cdots+e(a_w\rho_wb_w)\cd
\]
By Lemma \ref{lem_of_ext_map} (3)\kd each $e(a_i \rho_i b_i)$ is contained in $I'$\cd 
Thus $p' \in I'$\cd
Consequently $I' = \kernel{\varphi'}$\cd  \hfill{$\square$}
\bigskip

\section{Application of main theorem to Harada algebras}
In this section\kd we describe quivers with relations of upper staircase factor algebras\cd
Then we can describe quivers with relations of Harada algebras completely\cd
We use the notations which were used in previous sections\cd

We assume that $R$ is a basic QF-algebra with Nakayama permutation $\sigma$\cd We put $P=R(n_1,\cdots,n_m)$ (see Definition \ref{block_ext})\cd
For $1 \leq i \leq m$ \kd we take a sequence
\[
1 \leq c_{i1} \leq c_{i2} \leq \cdots \leq c_{in_i} \leq n_i \kd
\]
Then we consider an upper staircase factor algebra $\overline{P}=P/X$\kd where $X$ is defined by above sequences (see  Definition \ref{upper_st_fac})\cd
For $1 \leq i \leq m$\kd we define $l_{i0},l_{i1},\cdots,l_{iu_i}$ to satisfy the following conditions\cd 
\begin{enumerate}
\def\labelenumi{(\theenumi)} 
\item $l_{i0}=0$ and $l_{iu_i}=n_i$\cd
\item $c_{i,l_{ij-1}+1}=\cdots=c_{il_{ij}}$ for $1 \leq j \leq u_i$\cd
\item $c_{il_{ij}} < c_{i,l_{ij}+1}$ for $1 \leq j \leq u_i$\cd
\end{enumerate}

Now let us calculate relations for $\overline{P}$\cd
We take a path $\theta_i$ in $Q$ from $i$ to $\sigma(i)$ such that $\varphi(\theta_i) \in \socle{e_iR}$\cd
We put
\begin{eqnarray*}
\theta'_i &=& \delta_i e(\theta_i) \delta_{\sigma(i)} 
= \delta_{i1} \cdots \delta_{i,n_i-1} e(\theta_i) \delta_{\sigma(i)1} \cdots \delta_{\sigma(i),n_{\sigma(i)}-1}
\end{eqnarray*}
for $1 \leq i \leq m$ (see Definition \ref{extension_map} for $\delta_i$ and $e$)\cd 
Then we have the following lemma\cd

\begin{lemma}
For $1 \leq i \leq m$\kd $\varphi'(\theta'_i) \in \socle{f_{i1}P}$\cd
\end{lemma}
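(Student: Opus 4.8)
The goal is to show that $\varphi'(\theta_i')$ lies in $\socle{f_{i1}P}$. The natural approach is to compute $\varphi'(\theta_i')$ explicitly as a matrix unit in $P$ using the already-established formulas for $\varphi'$ on the arrows $\delta_{ij}$ and $\beta_{is}^t$, and then to verify that the resulting element is annihilated by $\jac{P}$ and is nonzero, hence generates a simple submodule of $f_{i1}P$. First I would use Lemma \ref{lem_of_ext_map}(3) (or directly the computation in its proof) to get $\varphi'(e(\theta_i)) = \tanni{r_i}_{in_i, \sigma(i)1}$ where $r_i = \varphi(\theta_i) \in \socle{e_i R} \subseteq e_i R e_{\sigma(i)}$, using that $R$ is QF with Nakayama permutation $\sigma$ so that $\socle{e_iR} = \socle{e_iR}e_{\sigma(i)}$ is one-dimensional. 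Then multiplying on the left by $\varphi'(\delta_i) = \tanni{1}_{i1,in_i}$ and on the right by $\varphi'(\delta_{\sigma(i)}) = \tanni{1}_{\sigma(i)1, \sigma(i)n_{\sigma(i)}}$ gives
\[
\varphi'(\theta_i') = \tanni{r_i}_{i1, \sigma(i)n_{\sigma(i)}} \in f_{i1} P f_{\sigma(i)n_{\sigma(i)}} \subseteq f_{i1}P.
\]

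\textbf{Key steps.} Having this explicit form, the remaining work is to check the two defining properties of a socle element: (i) $\varphi'(\theta_i') \neq 0$, and (ii) $\varphi'(\theta_i') \jac{P} = 0$. For (i), nonvanishing follows because $r_i$ is a nonzero element of $\socle{e_iR}$ (the path $\theta_i$ was chosen so that $\varphi(\theta_i)$ generates this simple socle, which is nonzero since $R$ is QF), and $\tanni{r_i}_{i1,\sigma(i)n_{\sigma(i)}}$ is nonzero in $P$ precisely when $r_i \neq 0$. For (ii), I would use the description of $\jac{P}$ given at the start of Section 4: a typical generator of $\jac{P}$ sits in some $J_{st, s't'}$, and $\tanni{r_i}_{i1,\sigma(i)n_{\sigma(i)}} \cdot \tanni{y}_{st,s't'}$ is zero unless $(s,t) = (\sigma(i), n_{\sigma(i)})$, in which case it equals $\tanni{r_i y}_{i1, s't'}$ with $y$ ranging over $J_{\sigma(i)n_{\sigma(i)}, s't'}$, which is contained in $\jac{Q_{\sigma(i)}}$ when $s' = \sigma(i)$ (since $n_{\sigma(i)} + 1 > t'$ always) and in $A_{\sigma(i)s'}$ when $s' \neq \sigma(i)$. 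In either case $r_i y \in \socle{e_iR} \cdot \jac{R} = 0$ because $\socle{e_iR}$ is killed by the radical. Hence $\varphi'(\theta_i')\jac{P} = 0$, so $\varphi'(\theta_i') K \subseteq \socle{f_{i1}P}$, and being nonzero it is a nonzero simple submodule, giving the claim.

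\textbf{Main obstacle.} The conceptual content is light; the only real care needed is bookkeeping with the matrix-unit indices and making sure the right-multiplication by elements of $\jac{P}$ is handled uniformly across the cases $s' = \sigma(i)$ versus $s' \neq \sigma(i)$ (the $i = \sigma(i)$ subtlety is already absorbed since the key fact $\socle{e_iR} \jac{R} = 0$ does not depend on it). I expect the main obstacle — such as it is — to be verifying cleanly that $J_{\sigma(i)n_{\sigma(i)}, s't'} \subseteq \jac{R}$-component in all cases, i.e.\ that from the row $n_{\sigma(i)}$ of $P$ every entry of $\jac{P}$ lands in the radical part of $R$; this is exactly the statement that $j + 1 > t$ forces $J_{ij,it} = \jac{Q_i}$ when $j = n_i$, combined with the off-diagonal entries always being $A_{is} \subseteq \jac{R}$. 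Once that is noted, the proof closes immediately.
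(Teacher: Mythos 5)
Your proposal is correct and follows essentially the same route as the paper: locate $\varphi'(\theta_i')$ at the matrix position $(i1,\sigma(i)n_{\sigma(i)})$, note it is nonzero because $\varphi(\theta_i)\neq 0$, and kill right multiplication by $\jac{P}$ using $\socle{e_iR}\,\jac{R}=0$. The only cosmetic difference is that you verify the annihilation entrywise against the explicit matrix description of $\jac{P}$, whereas the paper checks it only on the arrow generators $\beta^t_{\sigma(i)s}$ leaving the vertex $(\sigma(i),n_{\sigma(i)})$ and routes the computation back into $R$ via Lemma \ref{lem_of_ext_map}(2).
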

\begin{proof}
By proof of Lemma \ref{lem_of_ext_map} (1) and Lemma \ref{lem_of_ext_map} (2)\kd we have $\varphi'(\theta'_i) \neq 0$\cd
Clearly $\varphi'(\theta'_i) \in f_{i1}P$\cd 
To show $\varphi'(\theta'_i) \in \socle{f_{i1}P}$\kd we only have to show that $\varphi'(\theta'_i) \jac{P}=0$\cd
For any $1 \leq t \leq d_{\sigma(i)s}$\kd we have by definition of the extension map
\[
\theta'_i \beta^t_{\delta(i)s}=\delta_i e(\theta) \delta_{\sigma(i)} \beta^t_{\delta(i)s}=\delta_i e(\theta_i\alpha^t_{\sigma(i)s})\cd
\]
Since $\varphi(\theta_i\alpha^t_{\sigma(i)s})=0$ and Lemma \ref{lem_of_ext_map} (2)\kd we have $\varphi'(e(\theta_i\alpha^t_{\sigma(i)s}))=0$\cd
Consequently $\varphi'(\theta'_i) \varphi'( \beta^t_{\delta(i)s})=\varphi'(\theta'_i \beta^t_{\delta(i)s})=0$\cd We have $\varphi'(\theta'_i) \jac{P}=0$\cd
\end{proof}

We put
\[
\theta'_i(u,v)=\begin{cases}
\delta_{iu} \cdots \delta_{in_i-1} e(\theta_i) \delta_{\sigma(i)1} \cdots \delta_{\sigma(i)v} 
& (1 \leq u \leq n_i-1 \kd 1 \leq v \leq n_{\sigma(i)}-1 ) \kd \\
e(\theta_i) \delta_{\sigma(i)1} \cdots \delta_{\sigma(i)v} & (u=n_i \kd 1 \leq v \leq n_{\sigma(i)}-1)\cd
\end{cases}
\]
Then the two-sided ideal $X$ of $P$ is generated by
\[
\varphi'(\theta'_{i}(l_{ij},c_{il_{ij}})) \quad \quad (1 \leq i \leq m \kd 1 \leq j \leq u_i) \kd
\]
since we can obtain any entry in $X(i,\sigma(i))$ by multiplying $\varphi'(\delta)$'s to one of these elements\cd
Consequently we have the following theorem\cd

\begin{thm}
Under the hypotheses as above\kd
we have a $K$-algebra isomorphism
\[
\overline{P} \simeq KQ'/I'
\]
for an ideal 
\[
I'=(e(\rho_1),\cdots,e(\rho_w))+(\theta'_{i}(l_{ij},c_{il_{ij}}) \ | \  1 \leq i \leq m \kd 1 \leq j \leq u_i)
\]
of $KQ'$\cd
\end{thm}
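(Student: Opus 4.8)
The plan is to combine Theorem~\ref{main_thm2} with the description of the ideal $X$ given just above the statement. By Theorem~\ref{main_thm2} we already have a presentation $P \simeq KQ'/I'_0$ where $I'_0 = (e(\rho_1),\dots,e(\rho_w)) = \kernel{\varphi'}$. Since $\overline{P} = P/X$, the third isomorphism theorem gives $\overline{P} \simeq (KQ'/I'_0)/(\bar{X})$ where $\bar{X}$ is the preimage of $X$ under the surjection $KQ' \to P$; equivalently, $\overline{P} \simeq KQ'/(I'_0 + \widetilde{X})$ for any set of elements of $KQ'$ whose images under $\varphi'$ generate $X$ as a two-sided ideal of $P$. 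So the whole content reduces to the claim, asserted in the paragraph preceding the theorem, that the elements $\varphi'(\theta'_i(l_{ij}, c_{il_{ij}}))$, for $1 \le i \le m$ and $1 \le j \le u_i$, generate $X$ as a two-sided ideal of $P$.

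First I would unwind the definition of $X$: it is block-diagonal-supported on the positions $(i,\sigma(i))$, with $X(i,\sigma(i)) = S(i,\sigma(i))$, and $S(i,\sigma(i))$ has $(ij,\sigma(i)t)$-entry equal to $\socle{A_{i\sigma(i)}}$ (or $\socle{Q_i}$ when $i = \sigma(i)$) precisely when $t > c_{ij}$, and $0$ otherwise. Then I would identify $\varphi'(\theta'_i(u,v))$ inside the matrix algebra $P$: by the computation in the proof of Lemma~\ref{lem_of_ext_map}(2), $\varphi'(e(\theta_i)) = \tanni{\varphi(\theta_i)}_{in_i,\sigma(i)1}$, and multiplying on the left by the $\varphi'(\delta_{iu}),\dots,\varphi'(\delta_{in_i-1})$ moves the row index from $n_i$ down to $u$, while multiplying on the right by $\varphi'(\delta_{\sigma(i)1}),\dots,\varphi'(\delta_{\sigma(i)v})$ moves the column index from $1$ up to $v+1$; hence $\varphi'(\theta'_i(u,v)) = \tanni{\varphi(\theta_i)}_{iu,\sigma(i)(v+1)}$, which is a generator of $\socle{A_{i\sigma(i)}}$ sitting in position $(iu, \sigma(i)(v+1))$. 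I would then check that the chosen indices $(l_{ij}, c_{il_{ij}}+1)$ are exactly the ``upper-left corners'' of the staircase region defining $S(i,\sigma(i))$: by conditions (1)--(3) on the $l_{ij}$, the value $c_{il_{ij}}$ is constant on the block of rows $l_{i,j-1}+1 \le u \le l_{ij}$ and strictly increases at $u = l_{ij}$, so the nonzero entries of $S(i,\sigma(i))$ in those rows all lie in columns $> c_{il_{ij}}$, i.e.\ at or to the right of column $c_{il_{ij}}+1$.

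Next I would show that two-sided multiplication by the idempotents and by the $\varphi'(\delta_{\cdot})$ sweeps out all of $X$ from these corner elements. Left-multiplying $\tanni{\varphi(\theta_i)}_{iu,\sigma(i)(v+1)}$ by $f_{i1},\dots,f_{in_i}$ (really by the relevant $\varphi'(\delta)$ products going \emph{down} in the row index) and right-multiplying by $\varphi'(\delta_{\sigma(i)\cdot})$ products (going \emph{up} in the column index) produces $\tanni{\varphi(\theta_i)}_{iu',\sigma(i)t'}$ for every $u' \ge u$ in the relevant row-block and every $t' \ge v+1$; since $\socle{A_{i\sigma(i)}}$ is a simple left $Q_i$-module and a simple right $Q_{\sigma(i)}$-module, left/right multiplication by $Q_i$ resp.\ $Q_{\sigma(i)}$ (which sits inside $P$ as the diagonal blocks) recovers all of $\socle{A_{i\sigma(i)}}$ in each such entry. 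Ranging over $j = 1,\dots,u_i$ covers all row-blocks, so the two-sided ideal generated by the corner elements contains every entry of every $S(i,\sigma(i))$, hence contains $X$; conversely each corner element lies in $X$ since it sits in an entry where $S(i,\sigma(i))$ is nonzero. The main obstacle is this bookkeeping step: making precise that multiplying by the $\varphi'(\delta_{ij})$ and $\varphi'(f_{ij})$ really does shift matrix positions in the claimed way without leaving the submodule $S(i,\sigma(i))$ — in particular checking that moving \emph{up} a column never escapes the staircase (it cannot, since the nonzero region is a ``rightward-closed'' staircase) and that moving \emph{down} a row stays inside a single $l$-block or else lands in a later block where the staircase is even wider. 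Once that is verified, the theorem follows by the isomorphism-theorem argument of the first paragraph. $\square$
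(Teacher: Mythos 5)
Your strategy coincides with the paper's: reduce via Theorem \ref{main_thm2} and the third isomorphism theorem to the claim that the elements $\varphi'(\theta'_i(l_{ij},c_{il_{ij}}))$ generate $X$ as a two-sided ideal of $P$ (the paper dispatches that claim in a single sentence, so your fleshing-out is welcome). Your identification $\varphi'(\theta'_i(u,v))=\tanni{\varphi(\theta_i)}_{iu,\sigma(i)(v+1)}$ is correct, as is the use of the one-sided simplicity of $\socle{A_{i\sigma(i)}}$.

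However, the sweeping step is written with its orientation reversed, and as literally stated it would fail. Since $\varphi'(\delta_{iu'})=\tanni{1}_{iu',iu'+1}$, left multiplication by these elements \emph{decreases} the row index; more to the point, $P_{iu',il_{ij}}=Q_i$ for $u'\le l_{ij}$ but $P_{iu',il_{ij}}=\jac{Q_i}$ for $u'>l_{ij}$, and $\jac{Q_i}\socle{A_{i\sigma(i)}}=0$, so from the generator sitting in row $l_{ij}$ (the \emph{bottom} row of the $j$-th constant block of the $c_{i\bullet}$) you can only reach rows $u'\le l_{ij}$ --- not ``every $u'\ge u$'' as you claim, for which the products are simply zero. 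This is exactly what is needed: the two-sided ideal generated by $\tanni{\varphi(\theta_i)}_{il_{ij},\sigma(i)(c_{il_{ij}}+1)}$ consists of the matrices supported on rows $u'\le l_{ij}$ and columns $t'\ge c_{il_{ij}}+1$ with entries in $\socle{A_{i\sigma(i)}}$ (off-diagonal blocks $A_{ki}$, $A_{\sigma(i)k}$ also annihilate the socle since $R$ is QF). Because the $c_{i\bullet}$ are nondecreasing, rows $u'\le l_{ij}$ satisfy $c_{iu'}\le c_{il_{ij}}$, so this rectangle lies inside $S(i,\sigma(i))$; your closing remark that moving down ``lands in a later block where the staircase is even wider'' has the picture upside down --- later (lower) blocks are \emph{narrower}, which is precisely why the sweep must go upward and why the generators must be taken at the bottom rows $l_{ij}$ of each block. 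With the directions corrected, the union of these rectangles over $j$ is the full staircase $S(i,\sigma(i))$, the generation claim holds, and the isomorphism-theorem argument of your first paragraph completes the proof.
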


\begin{exam}
We consider $P=R(3,2)$ in Example \ref{exam_main_thm}\cd 
The Nakayama permutation of $R$ is the identity\cd 
We use the ideal $X$ in Example \ref{exam_of_str}\cd
\[
X=\left( \begin{array}{ccc|cc}
0 & (\alpha_{11})^3/(\alpha_{11})^4 & (\alpha_{11})^3/(\alpha_{11})^4 & 0 & 0 \\
0 & 0 & (\alpha_{11})^3/(\alpha_{11})^4 & 0 & 0 \\
0 & 0  & (\alpha_{11})^3/(\alpha_{11})^4 & 0 & 0 \\ \hline
0 & 0 & 0 & 0 & (\alpha_{21}\alpha_{12})/(\alpha_{21}\alpha_{12})^2 \\
0 & 0 & 0 & 0 & 0 
\end{array} \right) \cd
\]
In this case\kd $l_{11}=1 \kd l_{12}=3 \kd l_{21}=1 \kd l_{22}=2$\cd 
Then the quiver with relations of the upper staircase factor algebra $P/X$ is given by those in Example \ref{exam_main_thm} with additional relations
\[
\xymatrix{
& (1,2) \ar[rd]^{\delta_{12}} & \\
(1,1) \ar[ru]^{\delta_{11}} & & (1,3) \ar[d]^{\beta_{12}} \ar[ll]^{\beta_{11}}  \\
(2,2) \ar[u]^{\beta_{21}} & & (2,1) \ar[ll]_{\delta_{21}}
}
\quad \quad
\begin{cases}
\delta_{11} \delta_{12} e(\alpha_{11}^3) \delta_{11} = 0 \\
e(\alpha_{11}^3) \delta_{11} \delta_{12}=0 \\
\delta_{21} e(\alpha_{21} \alpha_{12}) \delta{21} = 0 
\end{cases}
\]
where $e(\alpha_{11}^3)=\beta_{11}\delta_{11}\delta_{12}\beta_{11}\delta_{11}\delta_{12}\beta_{11}$ and $e(\alpha_{21} \alpha_{12})=\beta_{12}\delta_{11}\delta_{12}\beta_{12}$\cd
\end{exam}

\bigskip

\end{document}